\newcommand{\modu}{\operatorname{mod}}
\newcommand{\hatc}{\hat{\mathbb{C}}}
\newtheorem{theorem}{\textbf{THEOREM}}[section]
\newtheorem{proposition}[theorem]{\textsc{Proposition}}
\theoremstyle{definition}
\newtheorem{question}[theorem]{\textsc{Question}}
{\theoremstyle{remark} }
\def\charfn_#1{{\raise1.2pt\hbox{$\chi_{\kern-1pt\lower3pt\hbox{{$\scriptstyle#1$}}}$}}}
\def\leq{\leqslant }
\def\geq{\geqslant }
\def\XXint#1#2#3{{\setbox0=\hbox{$#1{#2#3}{\int}$}
\vcenter{\hbox{$#2#3$}}\kern-.5\wd0}}
\begin{document}

\title{Extremal length and duality}
\author{Kai Rajala} 
	\address{Department of Mathematics and Statistics, University of Jyv\"askyl\"a, P.O. Box 35 (MaD), FI-40014, University of Jyv\"askyl\"a, Finland.}
	\email{kai.i.rajala@jyu.fi}
\date{}

\thanks{  
\newline {\it 2020 Mathematics Subject Classification.} 30C20, 30C35, 30C62, 30C65, 30L10. 
\newline Research supported by the Research Council of Finland, project number 360505. }
	
\begin{abstract} 
Classical extremal length (or conformal modulus) is a conformal invariant involving families of paths on the Riemann sphere. In ``Extremal length and functional completion'' (\cite{Fug57}), 
Fuglede initiated an abstract theory of extremal length which has since been widely applied. Concentrating on duality properties and applications to quasiconformal analysis, we demonstrate the flexibility of the theory 
and present recent advances in three different settings: 
\begin{enumerate}
\item Extremal length and uniformization of metric surfaces. 
\item Extremal length of families of surfaces and quasiconformal maps between $n$-dimensional spaces.     
\item Schramm's transboundary extremal length and conformal maps between multiply connected plane domains.  
\end{enumerate}
\end{abstract}

\maketitle

\renewcommand{\baselinestretch}{1.2}


\section{Introduction}
Extremal length is an important method in geometric function theory. The formal definition was introduced by Ahlfors and Beurling (\cite{AhlBeu50}), but the method already appeared in earlier works of Ahlfors, Warschawski, Gr\"otzsch, and others, see \cite{Rod74} for an overview. We recall the definition. 

Let $\hatc= \mathbb{C} \cup \{\infty\}$ be the Riemann sphere equipped with the spherical distance and the associated area element $dA$. 
The \emph{conformal modulus} $\modu(\Gamma)$ of a family of paths $\Gamma$ in $\hatc$ is 
\begin{equation}
\label{eq:modudef}  
\modu(\Gamma)=\inf \int_{\hatc} \rho^2 \, dA, 
\end{equation}  
where the infimum is over all Borel functions $\rho:\hatc \to [0,\infty]$ satisfying $\int_\gamma \rho \, ds \geq 1$ for all rectifiable $\gamma \in \Gamma$. 

The extremal length of $\Gamma$ equals $\frac{1}{\modu(\Gamma)}$. We follow modern terminology and talk about (conformal) modulus instead of extremal length. 

Modulus can used to define \emph{quasiconformal maps}: A sense-preserving homeomorphism $f:\Omega \to \Omega'$ between subdomains of $\hatc$ is \emph{$K$-qua\-si\-con\-for\-mal}, $K \geq 1$, if 
\begin{equation}
\label{eq:moduinv} 
\frac{1}{K} \modu(\Gamma) \leq  \modu(f\Gamma)  \leq K \modu(\Gamma) \quad \text{for all path families } \Gamma \text{ in } \Omega. 
\end{equation} 
The \emph{geometric definition} \eqref{eq:moduinv} is equivalent with other familiar definitions of quasiconformality (see \cite{Vai71}), such as the \emph{analytic definition} which requires that $f$ has integrable distributional 
partial derivatives and 
\begin{equation}  
\label{eq:qc} 
||Df||^2 \leq K J_f \quad \text{almost everywhere in } \Omega. 
\end{equation}  
Here $||Df||$ and $J_f$ are the operator norm and the jacobian determinant of $Df$, respectively.   

If $K=1$, then both \eqref{eq:moduinv} and \eqref{eq:qc} are definitions of conformal homeomorphisms $f:\Omega \to \Omega'$. In particular, $\modu$ is a conformal invariant. The conformality of the stereographic projection 
implies that the modulus of a family of paths in $\mathbb{C}$ is not affected if we replace the spherical metric and area element in \eqref{eq:modudef} with the euclidean metric and Lebesgue measure, respectively. 

The basic \emph{Gr\"otzsch identity} concerns modulus on rectangle $R=[0,a]\times[0,b]$: If $\Gamma_1$ and $\Gamma_2$ are the families of paths in $R$ connecting the vertical, respectively horizontal, sides of the boundary, then $\modu(\Gamma_1)=\frac{b}{a}$ and $\modu(\Gamma_2)=\frac{a}{b}$, see \cite{Ahl66}*{pp. 6--8}. In particular, for every $R$ we have 
\begin{equation} 
\label{eq:modudual} 
\modu(\Gamma_1) \cdot \modu(\Gamma_2)=1.  
\end{equation}  
Combining \eqref{eq:modudual} with the Riemann mapping theorem and conformal invariance of modulus, we conclude that \emph{duality property} \eqref{eq:modudual} holds for all \emph{topological rectangles} $R$ in $\hatc$ 
 and path families $\Gamma_1$ and $\Gamma_2$ connecting opposite edges of the boundary of $R$. 

Identity \eqref{eq:modudual} also holds in any topological annulus, or a more general \emph{condenser}, between the family $\Gamma_1$ of paths connecting the two boundary components and the dual family $\Gamma_2$ of paths separating them. Another basic identity concerns the family of paths $\Gamma_{r,R}$ joining the boundary components of concentric annulus $\mathbb{D}(0,R) \setminus \overline{\mathbb{D}(0,r)}$: 
\begin{equation}
\label{eq:concentric} 
\modu(\Gamma_{r,R})=2\pi \Big( \log \frac{R}{r}\Big)^{-1}. 
\end{equation}

Definition \eqref{eq:modudef} extends to path families in $\mathbb{R}^n$ (with the euclidean metric and Lebesgue measure). Moreover, exponent $2$ can be replaced with any $1 \leq p < \infty$, resulting in the 
\emph{$p$-modulus} $\modu_{p}$. The connecting $p$-modulus of a condenser such as a topological annulus equals its \emph{$p$-capacity}, a notion which is widely applied e.g. in potential theory and Sobolev space theory (see \cite{Ric93}*{Proposition II.10.2}).

Conformal invariance of $\modu_{p}$ holds exactly when $p=n$. Modulus estimates are fundamental in \emph{quasiconformal}, \emph{quasiregular} and related mapping theories which started to develop in the 1960s and successfully extend geometric function theory from $\mathbb{C}$ to $\mathbb{R}^n$, see \cite{Vai71}, \cite{Ric93}, \cite{GMP17}.     

Already before such $n$-dimensional considerations, Fuglede \cite{Fug57} presented an abstract theory of modulus of \emph{systems of measures}. In addition to the path families discussed above, such systems include e.g. families of $k$-dimensional surfaces in $\mathbb{R}^n$ as well as path families in general \emph{metric measure spaces}.  The theory (and \emph{Fuglede's lemma} in particular) plays a fundamental role in mapping theory as well as \emph{Analysis in metric spaces}, which includes Sobolev space (\cite{Che99}, \cite{Sha00}, \cite{HKST15}, \cite{ACdM15}, \cite{AILP24}) and potential (\cite{BjoBjo11}) theories based on the \emph{upper gradients} introduced by Heinonen and Koskela (\cite{HeiKos98}). 

We aim to illustrate the flexibility and wide applicability of the modulus method by discussing extensions of duality property \eqref{eq:modudual} in three different settings. Our presentation is motivated by applications to conformal and quasiconformal mappings, and although many of the results below hold for all exponents $1<p<\infty$, we will only consider the conformally invariant cases. Except for the propositions in Section \ref{sec:Transboundary}, which are fairly straightforward consequences of known uniformization theorems, none of the results presented here are new. 

In Section \ref{sec:MS}, we discuss the role of duality in recent developments on the uniformization problem on metric surfaces, and in particular extensions of the characterization of Ahlfors $2$-regular quasispheres by Bonk and Kleiner \cite{BonKle02}. In Section \ref{sec:SF} we discuss the (largely unknown) duality properties of $k$- and $(n-k)$-dimensional surface families in $\mathbb{R}^n$, and potential applications to the quasiconformal parametrization problem along the lines of Semmes \cite{Sem96}, Heinonen and Wu \cite{HeiWu10}, and Pankka and Wu \cite{PanWu14}. Finally, in Section \ref{sec:Transboundary} we discuss the \emph{transboundary modulus} introduced by Schramm \cite{Sch95}, who applied the method to prove important results on conformal uniformization problems such as the long-standing \emph{Koebe conjecture} \cite{Koe08}. 

\section{Extremal length on metric surfaces} \label{sec:MS}

The goal in the \emph{non-smooth uniformization problem} is to extend the classical uniformization theorem to metric spaces under minimal regularity assumptions. Although the discussion below also applies to ``metric surfaces'' with non-trivial topology, we assume for simplicity that $X=(X,d_X)$ is a \emph{metric sphere}, i.e., a metric space homeomorphic to the Riemann sphere $\hatc$ and with finite \emph{two-dimensional Hausdorff measure} $\mathcal{H}^2$. 

Here $\mathcal{H}^2$ is a standard notion of area which can be defined in any metric space, and the requirement $\mathcal{H}^2(X)<\infty$ rules out fractal spheres. It follows from the \emph{coarea inequality} and plane topology that if $V$ is an open subset of a metric sphere $X$ then $\mathcal{H}^2(V)>0$, see e.g. \cite{Raj17}*{Remark 3.4}. 

We seek uniformization by \emph{quasiconformal} and \emph{quasisymmetric maps}. Definition \eqref{eq:modudef} of conformal modulus $\modu$ extends to metric spheres by replacing the spherical metric with $d_X$ and area element 
$dA$ with $d\mathcal{H}^2$ (we normalize $\mathcal{H}^2$ so that $dA=d\mathcal{H}^2$ in $\hatc$). Consequently, we can define quasiconformal homeomorphisms between metric spheres by using the geometric definition \eqref{eq:moduinv}, i.e., by requiring quasi-invariance of $\modu$. 

A homeomorphism $\phi \colon  Y \to Z$ between metric spaces is \emph{quasisymmetric} if there is a homeomorphism 
$\eta:[0,\infty) \to [0,\infty)$ so that 
$$
d_Y(y_1,y_0) \leq t d_Y(y_2,y_0) \quad \text{implies} \quad d_Z(\phi(y_1),\phi(y_0)) \leq \eta(t)d_Z(\phi(y_2),\phi(y_0)). 
$$

In 2002, Bonk and Kleiner (\cite{BonKle02}, see also \cite{Sem91}) gave a characterization for the \emph{Ahlfors $2$-regular} metric spheres admitting quasisymmetric parametrizations from the Riemann sphere. 

\begin{theorem}[\cite{BonKle02}] \label{thm:BonKle}
Let $X$ be an Ahlfors $2$-regular metric sphere. Then there is a quasisymmetric homeomorphism $f: \hatc \to X$ if and only if $X$ is \emph{linearly locally connected}. 
\end{theorem}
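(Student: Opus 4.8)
The plan is to prove the two implications separately. The forward direction is routine: if $f\colon \hatc \to X$ is quasisymmetric, then since $\hatc$ is linearly locally connected and linear local connectivity is preserved under quasisymmetric homeomorphisms between bounded connected spaces, $X$ is linearly locally connected. So the substance is the converse: given that $X$ is Ahlfors $2$-regular and linearly locally connected (with fixed constants, the ``data'' of $X$), one must manufacture a quasisymmetric homeomorphism $f\colon \hatc \to X$. The idea is to produce $f$ as a sublimit of discrete approximations built from fine coverings of $X$, whose combinatorics is uniformized on $\hatc$ by the Koebe--Andreev--Thurston circle packing theorem.

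First I would discretize. For small $\delta>0$ choose a maximal $\delta$-separated set $\{x_i\}$ in $X$ and consider the covering $\{B(x_i,\delta)\}$. Ahlfors $2$-regularity bounds its multiplicity and the number of $x_i$ in any ball by a constant depending only on the data; in particular $X$ is doubling. Using this bounded geometry together with linear local connectivity and the fact that $X$ is topologically a sphere, one arranges (after a controlled refinement of a nerve-type complex on the vertices $x_i$) that this complex $T_\delta$ is a simplicial triangulation of $S^2$ in which adjacency reflects metric proximity in $X$ up to fixed multiplicative constants. On $T_\delta$ one then introduces, for each topological quadrilateral $Q\subset X$, the $\delta$-combinatorial modulus $\dmod(Q,\delta)$ of the family of vertex-chains joining the two ``$Q$-sides'', defined by minimizing $\sum_v w_v^2$ over weights assigning every such chain total weight at least $1$. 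This is the discrete analogue of $\modu$.

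The technical core --- and the step I expect to be \textbf{the main obstacle} --- is to show that $\dmod(Q,\delta)$ is bounded above and below by constants depending only on the data and on the shape of $Q$, uniformly in $\delta$. The upper bound is the easier half: using the maximal $\delta$-net one builds an explicit admissible weight supported on an annular neighborhood separating the $Q$-sides, and Ahlfors $2$-regularity --- this is exactly where the exponent $2$ in $\dmod$ must match the Hausdorff dimension --- bounds $\sum_v w_v^2$ by a constant. The lower bound is where linear local connectivity does the essential work: one must rule out cheap admissible weights, which amounts to producing a large, spread-out family of connecting chains; here one runs a Fubini/pigeonhole argument over a continuum of separating curves extracted from the $LLC$ condition, converting a small weight into a violation of admissibility. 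Combining the two bounds yields a uniform quasi-invariance of combinatorial modulus for quadrilaterals, the discrete shadow of the conformal invariance underlying classical uniformization.

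Finally I would uniformize and pass to the limit. Realize the triangulated sphere $T_\delta$ by a circle packing on $\hatc$ via Koebe--Andreev--Thurston, and let $f_\delta\colon \hatc \to X$ be the associated map sending the packing circle of $x_i$ essentially to $B(x_i,\delta)$. The ring lemma for circle packings controls the local geometry of the packing, and combined with the two-sided modulus bounds it gives that the $f_\delta$ are uniformly weakly quasisymmetric; since $X$ is connected and doubling, weak quasisymmetry upgrades to genuine quasisymmetry with control depending only on the data. Normalizing $f_\delta$ to fix three points, Arzel\`a--Ascoli applied to both $f_\delta$ and $f_\delta^{-1}$ (their equicontinuity following from the quantitative quasisymmetry bounds, which also prevent degeneration) extracts a subsequence converging uniformly to a homeomorphism $f\colon \hatc \to X$. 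Passing the quasisymmetry estimates to the limit shows that $f$ is quasisymmetric, which completes the proof.
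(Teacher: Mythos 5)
Your proposal is correct in outline, but it follows a genuinely different route from the one this survey presents: what you sketch is essentially the original argument of Bonk and Kleiner \cite{BonKle02} --- discretize $X$ by maximal $\delta$-separated sets, promote the nerve to a triangulation of the sphere, prove uniform two-sided bounds on combinatorial modulus of quadrilaterals (the upper bound from Ahlfors $2$-regularity, the lower bound from linear local connectivity together with doubling), uniformize the triangulation by the Koebe--Andreev--Thurston circle packing theorem, and extract a quasisymmetric limit of the normalized discrete maps via Arzel\`a--Ascoli; the converse direction is, as you say, just quasisymmetric invariance of linear local connectivity. The paper instead recovers Theorem \ref{thm:BonKle} from the duality theory of Section \ref{sec:MS}: the upper mass bound (the second inequality in \eqref{ineq:ahlfors}) implies the reciprocality condition \eqref{ineq:reciprocal}, hence by \cite{Raj17} (see Theorem \ref{thm:MSunif}(2)--(3)) there is a quasiconformal homeomorphism $f:\hatc \to X$, and since Ahlfors $2$-regular, linearly locally connected spheres have controlled geometry, the local-to-global principle of Heinonen and Koskela \cite{HeiKos98} upgrades $f$ to a quasisymmetry (Theorem \ref{thm:MSunif}(4)). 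Your route is self-contained, combinatorial, and produces discrete modulus comparisons of independent interest (in the spirit of Cannon-type problems); the survey's route cleanly separates the roles of the hypotheses --- only upper regularity is needed to obtain a quasiconformal parametrization, with full regularity and linear local connectivity entering only in the quasisymmetric upgrade --- and places the theorem inside the general uniformization result for arbitrary metric spheres. Be aware, though, that the genuinely hard steps in your plan (arranging that the nerve-type complex is an honest triangulation whose combinatorics is metrically faithful, and above all the uniform-in-$\delta$ lower modulus bound) are only flagged, not carried out; these constitute the technical core of \cite{BonKle02}, so as written your text is a faithful roadmap to that proof rather than an independent argument.
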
 
We say that $X$ is \emph{Ahlfors $2$-regular}, if there is $C \geq 1$ so that balls $B(x,r)$ in $X$ satisfy 
\begin{equation} 
\label{ineq:ahlfors}
C^{-1} r^2 \leq \mathcal{H}^2(B(x,r))  \leq Cr^2 \quad \text{for all } x \in X  \text{ and } 0<r< \operatorname{diam}(X). 
\end{equation}
Linear local connectivity is a quantitative connectedness condition which prohibits $X$ from having cusps or ridges. We omit the precise definition. 

By the \emph{local-to-global principle}, the classes of quasiconformal and quasisymmetric homeomorphisms $\phi:Y \to Z$ coincide when both $Y$ and $Z$ have \emph{controlled geometry}, see \cite{HeiKos98}. 
Ahlfors $2$-regular and linearly locally connected metric spheres have controlled geometry, see \cite{BonKle02}. 

In \cite{Raj17} we connected the uniformization problem to duality property \eqref{eq:modudual} by proving that the existence of a quasiconformal 
homeomorphism $f:\hatc \to X$ is equivalent to the following \emph{reciprocality condition}: there is $\kappa \geq 1$ such 
that every topological rectangle in $X$ and path families $\Gamma_1$ and $\Gamma_2$ as in \eqref{eq:modudual} satisfy 
\begin{equation} \label{ineq:reciprocal}
\kappa^{-1} \leq \modu(\Gamma_1) \cdot \modu(\Gamma_2) \leq \kappa. 
\end{equation} 

We also showed that ``upper $2$-regularity'' of $X$, i.e., the second inequality in \eqref{ineq:ahlfors}, implies \eqref{ineq:reciprocal}. Combining our results with the local-to-global principle, we recover Theorem \ref{thm:BonKle} using duality. 

It was proved in \cite{RajRom19} that the lower bound in \eqref{ineq:reciprocal} in fact holds in all metric spheres, and moreover in \cite{EriPog22} that the best possible constant $\kappa^{-1}$ is $\pi^2/16$. Assuming that the upper bound in \eqref{ineq:reciprocal} also holds, i.e., that there is a quasiconformal homeomorphism $f:\hatc \to X$, the best possible quasiconformality constants were established in \cite{Rom19}. Sharpness in each of these results is demonstrated by the plane equipped with the $L_\infty$-norm. 

Metric spheres for which the upper bound in \eqref{ineq:reciprocal} fails for every $\kappa$ can be constructed by collapsing large enough compact subsets $E$ of $\hatc$, such as the unit disk, a segment, or a Cantor set of positive area. More precisely, one chooses a continuous non-negative weight $\sigma$ which vanishes at $E$, and equips $\hatc$ with the associated length (pseudo)metric 
$d_\sigma$ (see \cite{Raj17}, \cite{IkoRom22}). Fine connections between the reciprocality upper bound for such metrics $d_\sigma$ and \emph{conformal removability} of $E$ have recently been studied in \cite{IkoRom22} and \cite{Nta24}. 

Another approach to Theorem \ref{thm:BonKle} was discovered by Lytchak and Wenger (\cite{LytWen20}) as an application of their theory of solutions to \emph{Plateau's problem in metric spaces}. Their 
approach involves \emph{weakly $K$-quasiconformal maps}, i.e., continuous, monotone (i.e., the preimage of every point is connected) 
and surjective mappings $f:\hatc \to X$ satisfying one half of modulus inequality \eqref{eq:moduinv}, namely 
\begin{equation} 
\label{ineq:wqc}
\modu(\Gamma) \leq K \modu(f\Gamma) \quad \text{for all path families } \Gamma \text{ in } \hatc. 
\end{equation} 

Quasiconformal maps $f:\hatc \to X$ are weakly quasiconformal. For the metric spheres $X=(\hatc, d_\sigma)$ above, the projection map $\pi:\hatc \to X$ is always weakly quasiconformal, and (strongly) quasiconformal only if $X$ is reciprocal. Towards an ultimate uniformization result on metric spheres, the author and Wenger conjectured that in fact \emph{every} metric sphere admits a weakly quasiconformal parametrization from $\hatc$. Recently,  Ntalampekos and Romney (\cite{NtaRom24},\cite{NtaRom23}) and Meier and Wenger (\cite{MeiWen24}, assuming $X$ is locally geodesic) proved that this is indeed the case. 

We summarize our discussion in the following theorem which combines the works mentioned above, culminating in \cite{NtaRom24}, \cite{NtaRom23} and \cite{MeiWen24}, and gives a satisfactory answer to the non-smooth uniformization problem in the setting of metric spheres. 

\begin{theorem} \label{thm:MSunif}
Every metric sphere $X$ admits a weakly $\frac{4}{\pi}$-quasiconformal map $f:\hatc \to X$. Moreover, the following properties hold. 
\begin{enumerate} 
\item Constant $\frac{4}{\pi}$ is best possible. 
\item $f$ is a quasiconformal homeomorphism if and only if $X$ satisfies \eqref{ineq:reciprocal} for some $\kappa \geq 1$. 
\item If $X$ satisfies the second inequality in \eqref{ineq:ahlfors}, then $f$ is a quasiconformal homeomorphism. 
\item If $X$ satisfies the second inequality in \eqref{ineq:ahlfors} and is linearly locally connected, then $f$ is a quasisymmetric homeomorphism. 
\end{enumerate} 
\end{theorem}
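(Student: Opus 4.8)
The plan is to obtain the main existence statement from the variational theory of the Plateau problem in metric spaces, and then to read off the four addenda from it together with the reciprocality results recalled in this section.

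\emph{Step 1: existence of a weakly $\tfrac{4}{\pi}$-quasiconformal map.} Following the approach of Lytchak and Wenger \cite{LytWen20} and, in the generality needed here, Ntalampekos and Romney \cite{NtaRom24}, \cite{NtaRom23} (and Meier and Wenger \cite{MeiWen24} for locally geodesic $X$), one fixes a conformal structure on $\hatc$ and works in the Sobolev class $W^{1,2}(\hatc,X)$ equipped with the Reshetnyak energy $E(\cdot)$. First one checks that the relevant admissible class --- Sobolev maps $\hatc\to X$ that are, in a suitable topological sense, of degree one --- is nonempty; this is where the hypothesis $\mathcal H^2(X)<\infty$ enters, through an approximation/triangulation construction of a single finite-energy competitor. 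A minimizer $u$ of $E$ then exists by lower semicontinuity of $E$ together with a Rellich-type compactness theorem for metric-space-valued Sobolev maps. The heart of the matter is to prove that an energy minimizer is continuous, monotone and onto: continuity and the monotone factorization come from an interior-regularity/bubbling analysis as in the Plateau theory, while surjectivity follows because omitting an open part of $X$ would produce a competitor of strictly smaller energy. Finally, the pointwise distortion estimate closes the argument: at a.e.\ $x$ the approximate metric differential of $u$ is a seminorm $N_x$ on $\mathbb R^2$, and the elementary planar convex-geometry inequality
$$
\Bigl(\sup_{|v|=1} N_x(v)\Bigr)^2 \;\le\; \frac{4}{\pi}\, J_u(x),
$$
with $J_u$ the area Jacobian and with equality precisely for the $\ell^1$- and $\ell^\infty$-norms, combines with the minimality of $u$ and Fuglede's lemma to yield \eqref{ineq:wqc} with $K=\tfrac4\pi$.

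\emph{Step 2: the four addenda.} For (1), take $X$ to be $\hatc$ carrying (the push-forward to $\hatc$ of) the $\ell^\infty$-metric on $\mathbb C$, for which $\mathcal H^2=\tfrac{\pi}{4}\,\mathcal L^2$ and the Gr\"otzsch-type identities can be computed directly: the resulting modulus ratios on a square force the distortion constant of any weakly quasiconformal $f\colon\hatc\to X$ to be at least $\tfrac4\pi$ (cf. \cite{EriPog22}, \cite{Rom19}, \cite{NtaRom24}). For (2), if $X$ satisfies \eqref{ineq:reciprocal} for some $\kappa$, then by \cite{Raj17} (the lower bound being automatic by \cite{RajRom19}) there is a genuine quasiconformal homeomorphism $g\colon\hatc\to X$, and the rigidity part of the theory (cf. \cite{NtaRom24}, \cite{Rom19}) --- applied to the weakly quasiconformal self-map $g^{-1}\circ f$ of the reciprocal sphere $\hatc$ --- shows that $f$ is injective, hence a homeomorphism; conversely a quasiconformal homeomorphism $\hatc\to X$ transports duality \eqref{eq:modudual} on $\hatc$ to \eqref{ineq:reciprocal} on $X$. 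For (3), the second inequality in \eqref{ineq:ahlfors} implies \eqref{ineq:reciprocal} by \cite{Raj17}, so (2) applies. For (4), once $f$ is a quasiconformal homeomorphism, upper $2$-regular and linearly locally connected spheres have controlled geometry \cite{BonKle02}, and the local-to-global principle \cite{HeiKos98} upgrades $f$ to a quasisymmetry --- which is exactly the statement of Theorem \ref{thm:BonKle}.

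\emph{Main obstacle.} Everything substantial sits in Step 1: producing the parametrization with \emph{no} regularity hypothesis on $X$, in particular establishing monotonicity and surjectivity of the energy minimizer and extracting the \emph{sharp} constant $\tfrac4\pi$ from the pointwise inequality. The four addenda are then essentially bookkeeping built on \cite{Raj17}, \cite{RajRom19}, \cite{BonKle02} and the local-to-global principle.
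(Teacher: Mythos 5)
The paper presents this theorem explicitly as a survey statement with no proof of its own, noting only that it ``combines the works mentioned above, culminating in \cite{NtaRom24}, \cite{NtaRom23} and \cite{MeiWen24}.'' Your proposal is a faithful and correctly organized assembly of exactly those references: Step~1 matches the Plateau-problem route of Lytchak--Wenger and its refinement by Ntalampekos--Romney (and Meier--Wenger in the locally geodesic case), including the pointwise $\tfrac{4}{\pi}$ seminorm inequality; Step~2 correctly attributes (1) to the $\ell^\infty$ extremal example flagged in the paper and to \cite{EriPog22}, \cite{Rom19}, (2)--(3) to \cite{Raj17} (with the lower bound automatic by \cite{RajRom19}), and (4) to Theorem~\ref{thm:BonKle} via the local-to-global principle \cite{HeiKos98}. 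Your argument in (2) that the constructed $f$ itself (and not merely some other map) is a homeomorphism when $X$ is reciprocal --- composing with a quasiconformal uniformization $g$ from \cite{Raj17} and invoking rigidity of weakly quasiconformal self-maps of $\hatc$ --- is a reasonable way to fill the small gap left implicit in the paper's summary. This is essentially the same approach as the paper; no genuine discrepancy.
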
 
 

\section{Extremal length of surfaces families}
\label{sec:SF} 

Let $1 \leq p<\infty$. We say that the \emph{$p$-modulus} $\modu_{k,p}(\Lambda)$ of a family of $k$-dimensional surfaces $\Lambda$ in $\mathbb{R}^n$, $1 \leq k <n$, is 
\begin{equation}
\label{eq:modusurf}  
\inf \int_{\mathbb{R}^n} \rho^p \, dx, 
\end{equation}  
where the infimum is over all Borel functions $\rho:\mathbb{R}^n \to [0,\infty]$ satisfying $\int_\lambda \rho  \geq 1$ for all $\lambda \in \Lambda$. 

In order to complete Definition \eqref{eq:modusurf}, one should define the class of ``$k$-dimensional surfaces'' $\lambda$ and integration over them. When $k=1$, a natural choice is that of (locally) rectifiable paths, which results in a 
straightforward generalization $\modu_p=\modu_{1,p}$ of the classical Definition \eqref{eq:modudef}.   

When $k\geq 2$, the definition of surfaces is more involved because of issues such as regularity and quasiconformal invariance. One may prefer both intrinsic or parametrized surfaces depending on the problem at hand. Below we consider intrinsic surfaces $\lambda$ under varying regularity assumptions. 

The first results concerning $\modu_{k,p}$ were proved by Fuglede \cite{Fug57} for families of Lipschitz $k$-manifolds. The theory of currents \cite{Fed69} can be used towards a general theory, although the invariance issue remains. 

In the 1960s, properties of the conformally invariant $\modu_n$ were developed by Gehring (\cite{Geh61}, \cite{Geh62b}), V\"ais\"al\"a (\cite{Vai61}, \cite{Vai61b}), and Mostow (\cite{Mos68}) to develop a theory of quasiconformal maps in $\mathbb{R}^n$. While surface families played a smaller role in the theory,  (however, see \cite{Rei70}, \cite{Aga71}), they appeared in connection with symmetrization and the following duality result proved by Gehring and Ziemer. 

Let $C(G,C_0,C_1)$ be a \emph{condenser}, i.e., $G$ is a bounded domain in $\mathbb{R}^n$, and $C_0,C_1 \subset \overline{G}$ disjoint compact sets. Denote by $\Gamma$ the family of rectifiable paths $\gamma:[0,1] \to \overline{G}$ so that $\gamma(0) \in C_0$, $\gamma(1) \in C_1$, and $\gamma(t) \in G$ for all $0<t<1$. Moreover, let $\Lambda$ be the family of surfaces \emph{separating $C_0$ and $C_1$ in $C(G,C_0,C_1)$} (see \cite{Zie67} for the precise definition of $\Lambda$). Then 
\begin{equation} 
\label{eq:gehzie}
\Big(\modu_n(\Gamma) \Big)^{\frac{1}{n}}\cdot \Big(\modu_{n-1,\frac{n}{n-1}}(\Lambda) \Big)^{\frac{n-1}{n}}=1. 
\end{equation} 
Gehring (\cite{Geh62}) proved \eqref{eq:gehzie} under a smoothness assumption which Zie\-mer (\cite{Zie67}) was able to remove using the theory of currents. He moreover extended the result to cover general exponents 
(\cite{Zie69}). See \cite{JonLah20} and \cite{LohRaj21} for recent extensions to metric spaces. 

When $1 < k< n-1$, the properties of $\modu_{k,p}$ are not well understood and applications to quasiconformal maps are scarce, even though the closely related differential forms and associated PDEs are fundamental in quasiconformal geometry (see e.g. \cite{DonSul89}, \cite{IwaMar93}, \cite{Iwa92}, \cite{BonHei01}, \cite{Pan08}, \cite{Pry19}, \cite{Kan21b}, \cite{Kan21}, \cite{HeiPan24}).  

Freedman and He (\cite{FreHe91}) studied $\modu_{k,p}$ in solid topological $3$-tori $T$ and showed that duality \eqref{eq:gehzie} does \emph{not} hold in $T$ (see \cite{Loh21} for a generalization to metric tori.) They also considered other versions of $\modu_{k,p}$ and suggested extensions to higher dimensions. 

The most significant applications of surface modulus to quasiconformal maps have been found by Heinonen and Wu (\cite{HeiWu10}) and Pankka and Wu (\cite{PanWu14}), who extended the three-dimensional results of Semmes \cite{Sem96} by showing that for every $n \geq 4$ there are $n$-manifolds $X$ which geometrically resemble $\mathbb{R}^n$ but are not quasiconformally equivalent to it. It follows that Theorem \ref{thm:BonKle} does not extend to any $n \geq 3$. 

Both \cite{HeiWu10} and \cite{PanWu14} apply classical spaces with ``wild topology'', such as the Whitehead manifold, equipped with the ``good'' metrics introduced in \cite{Sem96}. Roughly speaking, they show that specific families of surfaces, which arise from wildness and satisfy quasiconformal quasi-invariance of $\modu_{k,\frac{n}{k}}$, also satisfy duality similar to \eqref{eq:gehzie} in $\mathbb{R}^n$ 
but not in $X$ even if equality is replaced with multiplicative bounds as in \eqref{ineq:reciprocal}. Quasi-invariance then implies that there cannot be quasiconformal maps $f:\mathbb{R}^n \to X$. 

It is an open problem whether the duality results \eqref{eq:modudual} and \eqref{eq:gehzie} hold for $\modu_{k,p}$ when $2 \leq k \leq n-2$. We present a recent result of Lohvansuu (\cite{Loh23}), which proves  one half of duality under mild conditions. 

Let $Q \subset \mathbb{R}^n$ be a compact set and assume that there is a homeomorphism $h:Q \to I^n$ onto the unit $n$-cube $I^n$. Fix $1 \leq k \leq n-1$, and denote 
$$
A=h^{-1}(\partial I^k \times I^{n-k})  \quad \text{and} \quad B=h^{-1}(I^k \times \partial I^{n-k}). 
$$
We assume that $A$, $B$ and $Q$ are locally Lipschitz neighborhood retracts, and denote the \emph{Lipschitz homology groups} with integer coefficients by $H^L_*$ (see \cite{Loh23} for further details). 

Let $\Lambda_A$ be the collection of the images of relative Lipschitz $k$-cycles of $Q \setminus B$ that generate $H^L_k(Q,A)$, and define the modulus of $\Lambda_A$ as in \eqref{eq:modusurf}, where integration over $\lambda \in \Lambda_A$ is with respect to $\mathcal{H}^k$. Similarly, let $\Lambda_B$ be the dual family of $(n-k)$-dimensional surfaces which are the generators of $H^L_{n-k}(Q,B)$. 

\begin{theorem}[\cite{Loh23}]
\label{thm:loh}
\begin{equation} \label{eq:loh}
\Big(\modu_{\frac{n}{k}} (\Lambda_A) \Big)^{\frac{k}{n}} \cdot \Big(\modu_{\frac{n}{n-k}} (\Lambda_B) \Big)^{\frac{n-k}{n}} \leq 1. 
\end{equation} 
\end{theorem}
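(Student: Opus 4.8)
The plan is to adapt the classical Gehring--Ziemer argument behind the ``$\le 1$'' half of \eqref{eq:gehzie}, replacing the $n$-capacity potential (a scalar $n$-harmonic function whose gradient tests the connecting family and whose level sets test the separating family) by a \emph{$p$-harmonic differential form of degree $k$}, with $p=n/k$. First I would use the hypothesis that $A$, $B$ and $Q$ are Lipschitz neighborhood retracts to invoke a Lipschitz version of de Rham's theorem: the groups $H^L_k(Q,A)$ and $H^L_{n-k}(Q,B)$ are finitely generated and, via integration of forms over Lipschitz chains together with Stokes' theorem, dual to relative de Rham cohomology groups $H^k_{\mathrm{dR}}(Q,A)$ and $H^{n-k}_{\mathrm{dR}}(Q,B)$. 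Since the model is the cube $I^k\times I^{n-k}$, these homology groups have rank one, and the intersection pairing $H^L_k(Q,A)\times H^L_{n-k}(Q,B)\to\mathbb Z$ is perfect (intersection number $\pm1$), Poincar\'e--Lefschetz dual to the cup product $H^k_{\mathrm{dR}}(Q,A)\otimes H^{n-k}_{\mathrm{dR}}(Q,B)\to H^n_{\mathrm{dR}}(Q,A\cup B)\cong\mathbb R$; I would fix orientations so that all these signs are $+1$.

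Next I would let $\omega$ be a closed $k$-form minimizing the energy $\int_Q|\omega|^{n/k}\,dx$ over all $L^{n/k}$-representatives of the unique class of $H^k_{\mathrm{dR}}(Q,A)$ that pairs to $1$ with the chosen generator of $H^L_k(Q,A)$, so that $|\int_\lambda\omega|=1$ for every $\lambda\in\Lambda_A$ (all elements of $\Lambda_A$ being homologous generators, this period is class-invariant by Stokes). The Euler--Lagrange equation of this variational problem reads $d\star(|\omega|^{n/k-2}\omega)=0$ in the interior, with the natural boundary condition along $B$; hence $\eta:=\star(|\omega|^{n/k-2}\omega)$ is a closed $(n-k)$-form representing a class in $H^{n-k}_{\mathrm{dR}}(Q,B)$, so its periods over the members of $\Lambda_B$ share a common value $\Phi>0$. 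Because $\int_\lambda|\omega|\ge|\int_\lambda\omega|=1$, the Borel function $|\omega|$ is admissible for $\Lambda_A$, whence $\modu_{n/k}(\Lambda_A)\le E$ with $E:=\int_Q|\omega|^{n/k}\,dx$; and because $|\eta|=|\omega|^{n/k-1}$ while $\int_\mu\eta=\Phi$ for every $\mu\in\Lambda_B$, the function $\Phi^{-1}|\omega|^{n/k-1}$ is admissible for $\Lambda_B$, so that, using the identity $(n/k-1)\cdot\tfrac{n}{n-k}=\tfrac{n}{k}$, we obtain $\modu_{n/(n-k)}(\Lambda_B)\le\Phi^{-n/(n-k)}\int_Q|\omega|^{n/k}\,dx=\Phi^{-n/(n-k)}E$.

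It then remains to identify $\Phi$. From $\omega\wedge\eta=|\omega|^{n/k}\,dx$ one gets $E=\int_Q\omega\wedge\eta=\langle[\omega],[\eta]\rangle$; since $[\omega]$ pairs to $1$ with the generator of $H^L_k(Q,A)$ while $[\eta]$ pairs to $\Phi$ with the generator of $H^L_{n-k}(Q,B)$, perfectness (and rank one) of the Poincar\'e--Lefschetz pairing forces $E=\Phi$. Substituting gives $\modu_{n/(n-k)}(\Lambda_B)\le E^{1-n/(n-k)}=E^{-k/(n-k)}$, and together with $\modu_{n/k}(\Lambda_A)\le E$ this yields
$$\big(\modu_{n/k}(\Lambda_A)\big)^{k/n}\cdot\big(\modu_{n/(n-k)}(\Lambda_B)\big)^{(n-k)/n}\le E^{k/n}\cdot E^{-k/n}=1,$$
which is \eqref{eq:loh}. (Note that when $k=1$ this reduces to $\omega=du$ for the $n$-capacity potential $u$, recovering the Gehring--Ziemer inequality.)

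The hard part will be the nonlinear Hodge theory in the non-smooth category: establishing existence, uniqueness and enough regularity of the energy-minimizing form $\omega$ so that $|\omega|$ is a legitimate Borel competitor in \eqref{eq:modusurf}, that the weak Euler--Lagrange equation makes $\eta=\star(|\omega|^{n/k-2}\omega)$ genuinely closed (as a current, or a Lipschitz cochain) with the correct relative behavior along $B$, and that Stokes' theorem and the period and pairing identities survive for Lipschitz cycles paired against such low-regularity forms. This is exactly where the neighborhood-retract hypotheses and the Lipschitz-homology framework of \cite{Loh23} are needed; the coarea-type slicing and flux-constancy manipulations that are routine for the smooth condensers of \cite{Geh62} and \cite{Zie67} demand real care here. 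A secondary point is confirming that $H^L_k(Q,A)$ and $H^L_{n-k}(Q,B)$ indeed have rank one, so that the pairing step delivers $E=\Phi$ with no spurious constants.
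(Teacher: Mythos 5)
This is a survey paper, and it does not contain a proof of Theorem~\ref{thm:loh}: the result is stated and attributed to \cite{Loh23} without argument, so there is no ``paper's own proof'' to compare against. What the paper does provide, however, is a telling juxtaposition in the sentence immediately following the theorem: Kangasniemi and Prywes \cite{KanPry24} prove an actual duality \emph{equality}, but only after \emph{replacing} the Hausdorff-measure modulus \eqref{eq:modusurf} by a $p$-differential-form modulus of Lipschitz surfaces. Your proposal is essentially a sketch of the Kangasniemi--Prywes route (nonlinear Hodge theory, energy-minimizing closed $k$-form, Euler--Lagrange closedness of $\eta=\star(|\omega|^{n/k-2}\omega)$, Poincar\'e--Lefschetz pairing), and the fact that \cite{KanPry24} had to change the modulus is precisely the warning sign that this route does not straightforwardly give Theorem~\ref{thm:loh} as stated.

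The concrete gap is the passage from the $L^{n/k}$-minimizer $\omega$ to admissibility for the $\mathcal H^k$-based modulus. Your step ``$\int_\lambda|\omega|\ge|\int_\lambda\omega|=1$, hence $|\omega|$ is admissible for $\Lambda_A$'' presupposes two things that are not available at this level of regularity: (a) that the $L^{n/k}$ class $\omega$ has a Borel representative whose trace on each Lipschitz $k$-cycle $\lambda$ (a set of Lebesgue measure zero) is $\mathcal H^k$-measurable and recovers the period, and (b) that the period $\int_\lambda\omega$ is even well defined and cohomologically invariant for such $\omega$. Fuglede's lemma rescues (a) and (b) only up to an exceptional subfamily of $\modu_{n/k}$-modulus zero, which is fine for the single inequality $\modu_{n/k}(\Lambda_A)\le E$, but your pairing argument is more demanding: you need $\eta$ to be genuinely closed with \emph{constant} period $\Phi$ over all of $\Lambda_B$ and you need the flat-norm/Stokes bookkeeping ($E=\int_Q\omega\wedge\eta=\langle[\omega],[\eta]\rangle=\Phi$) to hold for these $L^p$ objects paired against Lipschitz chains. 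You flag this yourself as ``the hard part,'' but it is not a deferrable technicality: for $k\ge2$ there is no $C^{1,\alpha}$ regularity theory for $p$-harmonic $k$-forms playing the role that De~Giorgi--Nash--Moser and its $p$-Laplace refinements play in Gehring--Ziemer, and without pointwise control the admissibility and flux-constancy claims do not close. Lohvansuu's argument in \cite{Loh23} works directly with Lipschitz chains, coarea-type slicing, and a duality/minimax argument at the level of admissible Borel densities, and does not pass through the Euler--Lagrange system at all; that is how it avoids exactly the regularity wall your sketch runs into. A secondary issue: $\Lambda_A$ and $\Lambda_B$ are defined as \emph{sets} of supports of generating relative cycles, not as single homology classes, and the perfectness of the intersection pairing over $\mathbb Z$ for Lipschitz neighborhood retracts (rather than manifolds) is itself something you would have to prove rather than cite.
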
 
The validity of the opposite inequality in Theorem \ref{thm:loh} is not known.  
\begin{question}
Does equality hold in \eqref{eq:loh}? 
\end{question} 
Recently, Kangasniemi and Prywes (\cite{KanPry24}) proved that duality does hold if one replaces the modulus defined in \eqref{eq:modusurf} with a \emph{$p$-differential form modulus} 
of families of Lipschitz surfaces. Moreover, their result extends from $\mathbb{R}^n$ to Lipschitz Riemannian manifolds. Surface modulus in the context of extensions of \emph{Whitney's geometric integration theory} (see \cite{Whi57}, \cite{Hei05}) to metric spaces has been considered in \cite{RajWen13} and \cite{PRW15}. 



\section{Transboundary extremal length} 
\label{sec:Transboundary} 
Transboundary modulus is a generalization of the classical modulus and a powerful tool for solving both classical and modern uniformization problems. 
The method was introduced by Schramm (\cite{Sch95}) who studied uniformization of conformal maps between multiply connected subdomains of $\hatc$. 

The main open problem in the field is 
\emph{Koebe's conjecture} (\cite{Koe08}) which asserts that every such domain admits a conformal map onto a \emph{circle domain} whose collection of complementary components consists of disks and points. The most significant partial result is the following theorem by He and Schramm (\cite{HeSch93}) (Koebe himself proved the finitely connected case). 

\begin{theorem}[\cite{HeSch93}]\label{thm:hesch}
Every countably connected\footnote{i.e., the collection of connected components of $\hatc \setminus G$ is countable} domain $G \subset \hatc$ admits a conformal map onto a circle domain. 
\end{theorem}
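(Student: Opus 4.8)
The plan is to prove Theorem \ref{thm:hesch} by transfinite induction on the Cantor--Bendixson rank of the boundary set, using the transboundary modulus as the key technical device to pass to the limit. Since Koebe settled the finitely connected case classically (via an extremal problem or iterated reflections), the heart of the matter is organizing a convergent sequence of approximations and controlling it, and this is exactly where Schramm's transboundary modulus enters.

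First I would set up the finitely connected approximation: write $\hatc \setminus G = \bigcup_j K_j$ with the $K_j$ the connected components, and for each $m$ let $G_m$ be the domain obtained by ``filling in'' all but finitely many of the $K_j$ (choosing which ones to keep so that the retained components are, say, the $m$ largest in diameter, plus enough to make the choice canonical). Each $G_m$ is finitely connected, hence by Koebe's theorem admits a conformal map $f_m : G_m \to \Omega_m$ onto a circle domain; normalize the $f_m$ (fixing three boundary points or using a suitable normalization at a basepoint) so that the family is precompact. The goal is to extract a limit $f = \lim f_m$ that is conformal on $G$ and maps onto a circle domain.

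The transboundary modulus is what makes the limiting argument work. On the circle domains $\Omega_m$ one has good geometric control: the complementary disks behave like ``fat'' obstacles, and transboundary modulus estimates — in which one is allowed to ``jump across'' each complementary disk at a cost proportional to a weight assigned to that disk — are conformally invariant and can be estimated from below on circle domains by isoperimetric-type inequalities. Pulling these estimates back through $f_m$ to $G_m$, and using the uniform (in $m$) transboundary modulus bounds, one shows (a) equicontinuity of the $f_m$ up to the boundary components in a suitable sense, so that a subsequential limit $f$ exists and is injective, (b) that the complementary components $K_j$ are not collapsed to points unless they genuinely have zero transboundary size, and (c) that the images of the $K_j$ under $f$ are round disks (or points), because roundness is detected by equality-type behavior in the relevant modulus/area estimates which survives the limit. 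The countability hypothesis is used to index this process and, via a Cantor--Bendixson / transfinite exhaustion, to ensure that every complementary component is eventually ``activated'' in some $G_m$.

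The hard part will be step (b)--(c): ruling out degenerate limits and proving roundness of the limit obstacles. Controlling which complementary components collapse requires a rigidity statement — essentially that if a sequence of circle domains converges and the conformal maps converge, then a nondegenerate complementary disk cannot shrink to a point, which is where a uniform lower bound on transboundary modulus of separating path families is essential. Proving that the limiting obstacles are exactly round disks is a uniqueness/rigidity argument in the spirit of the finitely connected rigidity of circle domains (a form of the Koebe--Brandt uniqueness theorem), extended to the countable setting; establishing this uniqueness simultaneously with existence, so that the limit is forced to be a circle domain, is the subtle point. Everything else — Montel-type precompactness, the conformal invariance and basic estimates for transboundary modulus on circle domains, and the transfinite bookkeeping over the countably many components — is comparatively routine once this rigidity is in hand.
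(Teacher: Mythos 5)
The paper does not prove this theorem: it is the He--Schramm theorem, cited from \cite{HeSch93}, and the paper only uses it as a black-box input (notably in the proof of Proposition \ref{prop:squareunif}). There is therefore no in-paper argument to compare your sketch against; I can only assess the sketch on its own terms.

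In broad strokes your outline is the strategy of Schramm's later transboundary-modulus proof of the theorem (\cite{Sch95}), which the paper alludes to as a ``simplified proof.'' The scaffolding --- finitely connected truncations, Koebe for the finitely connected case, a normal-family argument, and passage to the limit via conformally invariant transboundary modulus estimates --- is sound as a plan. But the items you yourself flag as ``the hard part'' (non-collapse of nondegenerate boundary components, roundness of the limiting obstacles, and the rigidity forcing the limit to be a circle domain) are exactly where the substance of \cite{HeSch93} and \cite{Sch95} lies, and your sketch names them without supplying arguments. Concretely, what is missing is a quantitative lower bound on the transboundary modulus of families separating a given boundary component, exploiting cofatness of circle domains; Schramm's extended Carath\'eodory kernel convergence theorem to control the limit; and a uniqueness/rigidity theorem for countably connected circle domains. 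Also, the Cantor--Bendixson transfinite induction you propose as the organizing device is not the mechanism of Schramm's transboundary proof --- there, countability enters directly through the modulus estimates (for instance, countably many point-components contribute zero to the admissibility integral for a family of separating paths). Mixing the two mechanisms is not an error, but it suggests you have not committed to one coherent line of argument. As written, the proposal correctly identifies the right tools but restates the theorem's difficulties rather than resolving them.
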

See \cite{HeSch93} for further references, \cite{EsmRaj24}, \cite{KarNta24}, \cite{NtaRaj24}, \cite{NtaYou20}, \cite{Raj24} and \cite{You16} for recent results related to the conjecture, and \cite{BonMer13} 
for a notable development of transboundary modulus towards modern uniformization.  

Schramm (\cite{Sch95}) applied transboundary modulus to give a simplified proof for Theorem \ref{thm:hesch} and to establish a \emph{cofat uniformization theorem}: every subdomain 
of $\hatc$ whose complementary components are uniformly Ahlfors $2$-regular satisfies Koebe's conjecture. 

We recall the definition. Given a domain $G \subset \hatc$, we denote by $\mathcal{C}(G)$ the collection of connected components of 
$\hatc \setminus G$ and let $\hat{G}=\hatc / \sim$, where 
$$
x \sim y \text{ if either } x=y \in G \text{ or } x,y \in p \text{ for some } p \in \mathcal{C}(G). 
$$
Let $\pi_G:\hatc \to \hat G$ be the associated projection map. 
  
Fix a Borel function $\rho\colon \hat G \to [0,\infty]$ and a path $\gamma\colon [a,b]\to  \hat G$. Then $\gamma^{-1}( \pi_{G}(G))$ has countably many components $O_j\subset [a,b]$, $j\in J$. 
We denote $\gamma_j= \gamma|_{O_j}$ and $\alpha_j = \pi_{G}^{-1}\circ \gamma_j$, and define
\begin{align*}
\int_{\gamma} \rho \, ds= \sum_{j\in J}\int_{\alpha_j} \rho\circ \pi_{G} \, ds,
\end{align*}
where the integral is understood to be infinite if one of the paths $\alpha_j$ is not locally rectifiable. 

Let $\Gamma$ be a family of paths in $\hat G $.  A Borel function $\rho\colon \hat G \to [0,\infty]$ is \textit{admissible} for $\Gamma$ if 
\begin{align*}
\int_{\gamma} \rho \, ds + \sum_{\substack{p\in \mathcal C(G)\\ |\gamma|\cap p\neq \emptyset}} \rho( p)\geq 1
\end{align*}
for each $\gamma\in \Gamma$. Here $|\gamma|$ is the image of $\gamma$. The \textit{transboundary modulus} of $\Gamma$ with respect to the domain $G$ is 
\begin{align*}
\modu_{G}(\Gamma)=  \inf_{\rho}\Big\{ \int_{G}(\rho\circ \pi_{G})^2 \, dA + \sum_{p\in \mathcal C(G)}\rho(p)^2\Big\},  
\end{align*}
where the infimum is over admissible functions $\rho$. 

It is straightforward to check that conformal invariance (as well as quasi-invariance \eqref{eq:moduinv}) holds for transboundary modulus. For finitely connected domains whose complementary components are suitably round, such as the cofat domains above, $\modu_G$ behaves somewhat like the classical modulus $\modu$; one can prove qualitative estimates similar to the Gr\"otzsch identity and \eqref{eq:concentric} (\cite{Sch95}, \cite{Bon11}, \cite{Raj24}, \cite{NtaRaj24}). 

Such estimates are not valid in general domains, but one could still hope for duality \eqref{eq:modudual} to hold for transboundary modulus. It is not difficult to show that duality holds for finitely connected domains. Countably connected domains are more challenging, but one can rely on Theorem \ref{thm:hesch} to prove duality. 

\begin{proposition} \label{prop:countablemodu}
Let $G \subset \hatc$ be a countably connected domain and let $J_1\cup J_2\cup J_3 \cup J_4 \subset G$ be the boundary of a topological rectangle $R \subset \hatc$ with boundary edges $J_j$ in cyclic order. 
Moreover, let $\Gamma_1$ be the family of paths joining $\pi_G(J_1)$ and $\pi_G(J_3)$ in $\pi_G(R)$, and let $\Gamma_2$ be the family of paths joining $\pi_G(J_2)$ and $\pi_G(J_4)$ in $\pi_G(R)$. 
Then 
$$
\modu_G(\Gamma_1)\cdot \modu_G(\Gamma_2)=1. 
$$
\end{proposition}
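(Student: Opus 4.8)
\emph{Reduction to circle domains.} The plan is to deduce the identity from classical duality by reducing, via Theorem~\ref{thm:hesch} and conformal invariance, to the case of a circle domain, and then proving the two inequalities $\modu_G(\Gamma_1)\modu_G(\Gamma_2)\le1$ and $\modu_G(\Gamma_1)\modu_G(\Gamma_2)\ge1$ by an exhaustion argument and a coarea argument respectively. Let $\phi\colon G\to G'$ be a conformal map onto a circle domain $G'$, provided by Theorem~\ref{thm:hesch}. It is standard (cf.\ \cite{Sch95}, \cite{HeSch93}) that $\phi$ extends to a homeomorphism $\hat\phi\colon\hat G\to\hat{G'}$ commuting with $\pi_G$ and $\pi_{G'}$ and carrying the point of $\hat G$ coming from each $p\in\mathcal C(G)$ to the point of $\hat{G'}$ coming from a complementary component of $G'$; the essential point is that a complementary component of a circle domain has a single-point fibre under $\pi_{G'}$. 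By conformal invariance of transboundary modulus, $\modu_G(\Gamma_i)=\modu_{G'}(\hat\phi\Gamma_i)$, while $\hat\phi(\pi_G(R))=\pi_{G'}(R')$ for the topological rectangle $R'\subset\hatc$ bounded by the Jordan curve $\phi(J_1\cup\cdots\cup J_4)$, and $\hat\phi\Gamma_1,\hat\phi\Gamma_2$ are the families associated with $G'$ and $R'$. Hence we may assume $G$ itself is a circle domain; then $\hat G$ is homeomorphic to $\hatc$, since the complementary components are nonseparating continua and the bounding disks, being disjoint in a finite-area sphere, have radii $r_i\to0$, so Moore's theorem applies.

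\emph{The bound $\le1$.} Enumerate the complementary disks of $G$ as $D_1,D_2,\dots$ and set $G_N=G\cup\bigcup_{i>N}D_i$, a finitely connected circle domain still containing the edges $J_j$; let $\Gamma_1^N,\Gamma_2^N$ be the associated families in $\pi_{G_N}(R)$. For finitely connected domains the duality is straightforward, so $\modu_{G_N}(\Gamma_1^N)\cdot\modu_{G_N}(\Gamma_2^N)=1$. I would prove $\modu_G(\Gamma_i)\le\liminf_N\modu_{G_N}(\Gamma_i^N)$ by transplanting a near-optimal admissible function for $\modu_{G_N}(\Gamma_i^N)$ to $\hat G$: leave it unchanged off the disks, keep its values on $D_1,\dots,D_N$, and put its values on the remaining disks equal to $0$. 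Every curve in $\Gamma_i$ then lifts to one in $\Gamma_i^N$ at the cost of inserting, near each small disk $D_j$ $(j>N)$ it passes through, a short arc whose $\rho$-length is controlled by $r_j$; dividing by the resulting factor $1-E_N$ yields an admissible function for $\Gamma_i$ of energy at most $\modu_{G_N}(\Gamma_i^N)$, where $E_N\to0$ follows from standard bounds on the extremal metrics near the disks together with $\sum_i r_i^2<\infty$. Since the sequences $\bigl(\modu_{G_N}(\Gamma_j^N)\bigr)_N$ satisfy $\modu_{G_N}(\Gamma_1^N)\modu_{G_N}(\Gamma_2^N)=1$ for every $N$, we conclude $\modu_G(\Gamma_1)\modu_G(\Gamma_2)\le\liminf_N\modu_{G_N}(\Gamma_1^N)\cdot\liminf_N\modu_{G_N}(\Gamma_2^N)\le1$.

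\emph{The bound $\ge1$.} Let $u$ be the extremal function for $\modu_G(\Gamma_1)$: $u$ is harmonic in $G$, equal to $0$ on $J_1$ and $1$ on $J_3$, satisfies the Neumann condition on $J_2\cup J_4$ and is constant on each $\partial D_i$, and $\int_G|\nabla u|^2=\modu_G(\Gamma_1)$. For almost every $t\in(0,1)$ the level set $\{u=t\}$ misses every $D_i$ (as $u$ is constant, with a value $\ne t$, on each $\partial D_i$) and contains a sub-curve joining $\pi_G(J_2)$ to $\pi_G(J_4)$ in $\pi_G(R)$, hence an element of $\Gamma_2$; therefore $\int_{\{u=t\}}\sigma\,d\mathcal H^1\ge1$ for every admissible pair $(\sigma,(b_i)_i)$ for $\Gamma_2$, no boundary terms contributing. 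Integrating in $t$, the coarea formula gives $\int_G\sigma\,|\nabla u|\,dA\ge1$, and Cauchy--Schwarz yields $1\le\Bigl(\int_G\sigma^2\Bigr)^{1/2}\Bigl(\int_G|\nabla u|^2\Bigr)^{1/2}\le\Bigl(\int_G\sigma^2+\sum_i b_i^2\Bigr)^{1/2}\modu_G(\Gamma_1)^{1/2}$. Taking the infimum over $(\sigma,(b_i)_i)$ gives $\modu_G(\Gamma_1)\modu_G(\Gamma_2)\ge1$, which with the previous step proves the proposition.

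\emph{Main obstacle.} The substance of the argument is in the circle domain case, and I expect the hardest parts to be (a) establishing the existence and the harmonic structure of the extremal function $u$ (with no disk terms) on the countably connected domain $G$ --- extending the ``not difficult'' finitely connected picture --- and (b) the quantitative control of the extremal metrics near the accumulating disks required to make $E_N\to0$ in the bound $\le1$. Theorem~\ref{thm:hesch} enters precisely to make the complementary components round, so that $\sum_i r_i^2<\infty$ is available for (b); for general complementary continua this sum need not be finite. The homeomorphic extension $\hat\phi$ in the reduction step is standard but must be invoked with some care.
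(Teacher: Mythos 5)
Your reduction to a circle domain via Theorem~\ref{thm:hesch} and the homeomorphic extension to the quotients is fine (it is also the paper's first step), but both halves of your argument then rest on steps that do not hold up. For the bound $\le 1$, the exhaustion does not close. First, $G_N=G\cup\bigcup_{i>N}D_i$ is in general neither open nor finitely connected: other disks and the point components of the circle domain may accumulate on each $\overline{D_i}$, so you would need $G_N=\hatc\setminus(\overline{D_1}\cup\dots\cup\overline{D_N})$, and even then the duality you quote for $G_N$ is the genuinely finitely connected case only after all point components have been absorbed. More seriously, the error $E_N$ is not controlled: a single curve of $\Gamma_i$ can meet infinitely many of the disks $D_j$, $j>N$, the near-extremal metric $\rho_N$ admits no useful pointwise bound near the (possibly everywhere accumulating) small disks or near the corners, and even granting a per-disk insertion cost of order $r_j$ you would need $\sum_{j>N}r_j\to 0$, whereas a circle domain only guarantees $\sum_j r_j^2<\infty$. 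One can discard bad curves by a Fuglede-type exceptional-family argument, but any multiplicative loss destroys the exact identity you are after. This is precisely the difficulty the paper bypasses by uniformizing $R\cap G$ onto a square-type domain $G'\subset Q=[0,1]\times[0,b]$ with axis-parallel square holes (Proposition~\ref{prop:squareunif}, built from Schramm's cofat uniformization and his extended Carath\'eodory kernel convergence theorem \cite{Sch95}); there the explicit metric $\rho\equiv 1$ on $G'$, $\rho(Q_k)=\delta_k$, is admissible with energy exactly $b$, countability entering only to make the point components project to a null set.

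For the bound $\ge 1$, the key structural claim is false. The extremal metric of the transboundary problem $\modu_G(\Gamma_1)$ is not $|\nabla u|$ for a harmonic $u$ that is constant on each $\partial D_i$ and carries no mass on the components: in Schramm's picture the components carry positive mass (the side lengths of their image squares), $u$ is not constant on the circles, $\int_G|\nabla u|^2<\modu_G(\Gamma_1)$ in general, and the level sets $\{u=t\}$ cross the holes for a set of $t$ of positive measure. Hence ``no boundary terms contributing'' fails, and the coarea/Cauchy--Schwarz computation must account for the component masses --- exactly the bookkeeping the paper carries out in the square domain, where a horizontal segment crossing $Q_k$ pays width $\delta_k$ and the identity $|G'|_2+\sum_k\delta_k^2=|Q|_2=b$ makes H\"older sharp. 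Replacing $u$ by the Neumann-problem solution does not rescue the argument: its level sets terminate on the circles, so the separating curves still meet the components, and the hole-avoiding classical moduli satisfy only $\modu(\Gamma_i^{G})\le\modu_G(\Gamma_i)$, with a product that can be strictly less than $1$. Finally, existence and regularity of an extremal for the infinitely connected transboundary problem is itself nontrivial and is obtained in the paper only through the limiting uniformization, not assumed. In short, both inequalities in your outline hinge on unproved or incorrect claims, and the square-domain uniformization of Proposition~\ref{prop:squareunif} is the missing ingredient that makes both computations exact.
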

We prove Proposition \ref{prop:countablemodu} in Section \ref{sec:proofs}. Duality does not hold without the countability assumption; if $G$ is the complement of a Cantor set of positive area, then an argument involving parallel segments and Fubini's theorem shows that one can choose (geometric) squares $R$ in Proposition \ref{prop:countablemodu} so that $\modu_G(\Gamma_1)\cdot \modu_G(\Gamma_2)$ becomes arbitrarily large. 

Thus the \emph{duality upper bound} does not hold in general. The following extension of Proposition \ref{prop:countablemodu} however shows that the \emph{duality lower bound} holds for all domains 
that satisfy Koebe's conjecture. 

\begin{proposition} 
\label{prop:koebelower}
Suppose domain $G \subset \hatc$ admits a conformal map onto a circle domain. Then 
\begin{equation} 
\label{ineq:lower}
\modu_G(\Gamma_1)\cdot \modu_G(\Gamma_2)\geq 1
\end{equation} 
for all topological rectangles $R$ as in Proposition \ref{prop:countablemodu}. 
\end{proposition}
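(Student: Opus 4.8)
The plan is to reduce, by conformal invariance, to a circle domain, and then to compare the transboundary modulus there with the one obtained after filling in the point components, which lands us in the countably connected situation of Proposition~\ref{prop:countablemodu}. For $\varrho\ge0$ on $\hat D$ I write $\mathcal{E}_D(\varrho)=\int_D(\varrho\circ\pi_D)^2\,dA+\sum_{p\in\mathcal C(D)}\varrho(p)^2$, so that $\modu_D(\Gamma)=\inf\mathcal{E}_D(\varrho)$ over admissible $\varrho$.

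Since transboundary modulus is a conformal invariant, and a conformal map $f\colon G\to\Omega$ onto a circle domain carries the topological rectangle $R$ (whose four edges lie in $G$) to one whose four edges lie in $\Omega$, I may assume $G=\Omega$ is a circle domain. Its complementary components are countably many pairwise disjoint closed round disks $\overline{D_1},\overline{D_2},\dots$ (any collection of disjoint disks in $\hatc$ is countable) together with a set $P$ of one-point components. Put $\Omega^{\ast}=\hatc\setminus\bigcup_k\overline{D_k}\supseteq\Omega$: since $\Omega$ is a connected dense subset of $\Omega^{\ast}$, the latter is again a circle domain, and it is countably connected. Collapsing a point does nothing, so $\hat\Omega=\hat{\Omega^{\ast}}$, $\pi_\Omega=\pi_{\Omega^{\ast}}$, $\pi_\Omega(R)=\pi_{\Omega^{\ast}}(R)$, and the families $\Gamma_1,\Gamma_2$ are the same whether formed with respect to $\Omega$ or $\Omega^{\ast}$; the only difference between $\modu_\Omega$ and $\modu_{\Omega^{\ast}}$ is that a point of $P$ is a weighted boundary component for the first and an ordinary point of the surface for the second. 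Proposition~\ref{prop:countablemodu} applied to $\Omega^{\ast}$ gives $\modu_{\Omega^{\ast}}(\Gamma_1)\cdot\modu_{\Omega^{\ast}}(\Gamma_2)=1$, so \eqref{ineq:lower} will follow once I prove $\modu_\Omega(\Gamma_i)\ge\modu_{\Omega^{\ast}}(\Gamma_i)$ for $i=1,2$ and multiply the two inequalities.

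To prove this I would use the local fact that for every point $q$ of the surface $\hat{\Omega^{\ast}}$ the family $\Lambda_q$ of paths whose image contains $q$ satisfies $\modu_{\Omega^{\ast}}(\Lambda_q)=0$. Granting it, fix $\varepsilon>0$ and a function $\rho$ admissible for $\Gamma_i$ with respect to $\Omega$ with $\mathcal{E}_\Omega(\rho)\le\modu_\Omega(\Gamma_i)+\varepsilon$. Only countably many one-point components $p$ have $\rho(p)>0$; for each such $p$ choose $\sigma_p\ge0$ admissible for $\Lambda_p$ with $\mathcal{E}_{\Omega^{\ast}}(\sigma_p)$ so small that $\sum_p\rho(p)\sqrt{\mathcal{E}_{\Omega^{\ast}}(\sigma_p)}<\varepsilon$. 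Set $\rho^{\ast}=\rho+\sum_p\rho(p)\sigma_p$ on $\hat{\Omega^{\ast}}$, where $\rho$ is extended by $0$ on $P$; this agrees with $\rho$ on $\Omega$ and on the disk atoms up to the small added terms. If $\gamma\in\Gamma_i$ meets a point $p$ with $\rho(p)>0$ then $\gamma\in\Lambda_p$, so admissibility of $\sigma_p$ forces the $\rho(p)\sigma_p$-mass carried along $\gamma$ to be at least $\rho(p)$; adding these up, the total mass of $\rho^{\ast}$ along $\gamma$ (the line integral over the region plus the weights of the disk atoms met) is at least the corresponding mass of $\rho$ for the $\Omega$-structure plus $\sum_{p\in P,\,p\in|\gamma|}\rho(p)$, hence at least $1$ since $\rho$ is admissible. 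Thus $\rho^{\ast}$ is admissible for $\Gamma_i$ with respect to $\Omega^{\ast}$, and by the triangle inequality in $L^{2}$ (of region and atoms together) $\sqrt{\mathcal{E}_{\Omega^{\ast}}(\rho^{\ast})}\le\sqrt{\mathcal{E}_{\Omega^{\ast}}(\rho)}+\varepsilon\le\sqrt{\mathcal{E}_\Omega(\rho)}+\varepsilon$, so $\mathcal{E}_{\Omega^{\ast}}(\rho^{\ast})\le\modu_\Omega(\Gamma_i)+O(\varepsilon)$. Letting $\varepsilon\to0$ yields $\modu_{\Omega^{\ast}}(\Gamma_i)\le\modu_\Omega(\Gamma_i)$.

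The step I expect to be the main obstacle is the fact $\modu_{\Omega^{\ast}}(\Lambda_q)=0$ for a point $q$ that was a one-point component of $\Omega$: the complementary disks of $\Omega^{\ast}$ may cluster at $q$ at every scale, so $\hat{\Omega^{\ast}}$ need not be upper $2$-regular near $q$, and the usual radial-annulus test function does not on its own force small energy because a path can reach $q$ cheaply by hopping across a chain of collapsed disks. The remedy is to add to the radial (logarithmic) test function a weight on the atoms of the disks through which such a chain must pass, chosen only square-summable but divergent along every chain descending to $q$, so that a path taking that route picks up infinite mass, while the disjointness of the disks keeps the total added energy arbitrarily small. With $\modu_{\Omega^{\ast}}(\Lambda_q)=0$ in hand, the previous step gives $\modu_\Omega(\Gamma_i)\ge\modu_{\Omega^{\ast}}(\Gamma_i)$, and hence $\modu_\Omega(\Gamma_1)\cdot\modu_\Omega(\Gamma_2)\ge\modu_{\Omega^{\ast}}(\Gamma_1)\cdot\modu_{\Omega^{\ast}}(\Gamma_2)=1$.
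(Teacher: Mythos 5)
Your reduction to a circle domain $\Omega$ by conformal invariance is fine and matches the first step of the paper, but the way you then try to invoke Proposition~\ref{prop:countablemodu} has a genuine gap: the set $\Omega^{*}=\hatc\setminus\bigcup_k\overline{D_k}$ is in general \emph{not open}, hence not a domain, so neither Theorem~\ref{thm:hesch} nor Proposition~\ref{prop:countablemodu} applies to it and $\modu_{\Omega^{*}}$ is not even defined in the sense of the paper. The obstruction is exactly the configuration you would most want to handle: a circle domain can have a point component $q$ at which the disks $\overline{D_k}$ accumulate (e.g.\ disks of radius $10^{-1}k^{-2}$ centered at $1/k$, with $q=0$ as a singleton complementary component); then $\Omega^{*}=\Omega\cup\{0\}$ has no neighborhood of $0$ contained in it. Passing instead to $\hatc\setminus\overline{\bigcup_k D_k}$ restores openness but typically destroys countable connectivity, since the accumulation set can be a Cantor set of new singleton components. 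So the reduction to the countably connected case does not go through.

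A secondary, acknowledged gap is the claim $\modu_{\Omega^{*}}(\Lambda_q)=0$ for a former point component $q$. Even granting that $\Omega^{*}$ were a domain, you only sketch a repair for the ``hopping along a chain of collapsed disks'' phenomenon; as written this is a plausibility argument, not a proof, and it is precisely in the accumulation situation above that one would have to make it precise.

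For comparison, the paper does not attempt to remove or neutralize the point components. It conformally uniformizes $R\cap G$ onto a subdomain $G'$ of a Euclidean rectangle $Q=[0,1]\times[0,b]$ whose bounded complementary components are axis-parallel squares and points (Proposition~\ref{prop:squareunif}, via Schramm's cofat uniformization and kernel convergence), and then proves the lower bounds $\modu_{G'}(\Lambda_1)\ge b$ and $\modu_{G'}(\Lambda_2)\ge 1/b$ directly by integrating the admissibility inequality over horizontal (resp.\ vertical) slices and applying Fubini, Cauchy--Schwarz, and the area identity $|G'|_2+\sum_k|Q_k|_2\le|Q|_2$. In this slicing argument the point components are handled automatically: since only countably many carry positive weight, they contribute nothing for all but countably many slices. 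Multiplying the two bounds and using conformal invariance gives \eqref{ineq:lower} without any countability hypothesis and without the $\Omega^{*}$ detour. If you want to salvage your approach you would need a substitute for Proposition~\ref{prop:countablemodu} that applies to circle domains directly; the paper's slicing argument is essentially that substitute.
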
 
We prove Proposition \ref{prop:koebelower} in Section \ref{sec:proofs}. We do not know if the duality lower bound holds in general. 

\begin{question} 
\label{ques:lower}
Does \eqref{ineq:lower} hold for all domains $G \subset \hatc$? 
\end{question} 
By Proposition \ref{prop:koebelower}, a negative answer to Question \ref{ques:lower} would give a negative answer to the Koebe conjecture. 
On the other hand, a positive answer could be considered evidence towards the Koebe conjecture.


\section{Proofs of Propositions \ref{prop:countablemodu} and \ref{prop:koebelower}} \label{sec:proofs}
Let $G \subset \hatc$ be a domain satisfying the assumptions of either of the propositions, and let $R$ be a topological rectangle with edges $J_j \subset G$ as above. 

\begin{proposition}\label{prop:squareunif}
There is a conformal homeomorphism $f:R \cap G \to G' \subset Q=[0,1] \times [0,b]$, $b>0$, with a continuous extension $\tilde{f}$ to $\partial R$, so that 
\begin{itemize} 
\item[(i)] $\tilde{f}$ maps each $J_j$ onto a boundary edge $I_j$ of $Q$, and 
\item[(ii)] the collection of bounded complementary components of $G'$ consists of points and squares whose sides are parallel to coordinate axes. 
\end{itemize} 
\end{proposition}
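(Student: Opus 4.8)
The plan is to produce the desired conformal map by composing a Riemann-type map for the topological rectangle with the circle domain uniformization supplied by the countable connectivity assumption (Theorem~\ref{thm:hesch}), and then to reconcile the two pictures. First I would forget the complementary components for a moment and apply the classical theory: the topological rectangle $R$ is a Jordan domain, so by the Riemann mapping theorem together with Carathéodory's theorem there is a conformal homeomorphism $g$ of $R$ onto a geometric rectangle $Q=[0,1]\times[0,b]$ extending homeomorphically to the boundary, and after composing with an automorphism of the rectangle we may assume that $g$ sends each edge $J_j$ to the corresponding edge $I_j$ of $Q$, which is property~(i). (The aspect ratio $b$ is of course forced: it is the modulus of the four-sided region $R$.) At this stage, however, the conformal map has only been defined on $R$, whereas we need a conformal map of $R\cap G$, so the complementary components of $G$ lying inside $R$ must be dealt with.

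Next I would bring in the uniformization of $G$ itself. By Theorem~\ref{thm:hesch}, since $G$ is countably connected there is a conformal homeomorphism $\Phi$ of $G$ onto a circle domain $D$; thus every complementary component of $G$ that meets the interior of $R$ is mapped by $\Phi$ to a round disk or a point of $\hatc\setminus D$. Now consider the Jordan curve $\partial R$: its image $\Phi(\partial R)$ is a Jordan curve in $\hatc$, bounding a Jordan domain $\Omega$, and $\Phi$ restricts to a conformal homeomorphism of $R\cap G$ onto $\Omega\cap D$, with the complementary components of $\Omega\cap D$ inside $\Omega$ being exactly the disks and points coming from the components of $\hatc\setminus G$ that lie in $R$. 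So $R\cap G$ has been identified conformally with a circle-type domain sitting inside a Jordan domain $\Omega$, with the four boundary arcs $\Phi(J_j)$ distinguished on $\partial\Omega$. It remains to straighten $\Omega$ to the square $Q$ while turning the round holes into axis-parallel square holes.

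For this last step I would apply a second Riemann map $\psi:\Omega\to Q=[0,1]\times[0,b]$, again normalized via Carathéodory so that $\psi(\Phi(J_j))=I_j$; the composition $f=\psi\circ\Phi$ is then a conformal homeomorphism of $R\cap G$ onto a domain $G'\subset Q$ with a continuous boundary extension satisfying~(i). The images $\psi(\Phi(p))$ of the complementary components are now Jordan subdomains (and points) of $Q$, but they are no longer round, so to obtain property~(ii) I would post-compose with one further conformal map: the complement in $\hatc$ of the closures of these Jordan domains together with $\hatc\setminus Q$ forms a multiply connected domain which is again countably connected, so by a \emph{square packing} version of the uniformization theorem — i.e., the analogue of Theorem~\ref{thm:hesch} in which prescribed complementary components are mapped to squares with sides parallel to the axes and the remaining ones to points (this is the form of uniformization used by Schramm \cite{Sch95} and is available for countably connected domains) — we get a conformal map normalized to fix the outer square $Q$ (hence its four edges, up to the required identifications) and send the inner Jordan domains to axis-parallel squares and the degenerate ones to points. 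Composing this with $f$ gives the map asserted in the proposition.

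The main obstacle, and the point requiring the most care, is precisely this final invocation: one must have the \emph{square-domain} uniformization theorem for countably connected domains, not merely the circle-domain version, and one must check that it can be normalized so as to leave the bounding square $Q$ (and the vertex positions determining the edges $I_j$) essentially fixed, so that property~(i) survives the last composition. Some attention is also needed at the junctures where complementary components of $G$ touch the curve $\partial R$ — one should either arrange (by a preliminary small perturbation of $R$, or by the hypothesis that the $J_j\subset G$) that the edges $J_j$ meet $\hatc\setminus G$ only in a controlled way, so that the boundary behavior of the successive Riemann maps is unambiguous, or handle such components as boundary points of $\Omega\cap D$ rather than interior holes. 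Once the square-packing uniformization is in hand the rest is routine: the construction is a finite tower of conformal maps, and conformality and the boundary correspondences are preserved at each stage.
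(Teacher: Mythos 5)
Your construction defers all of the real difficulty to the final step, and then does not carry it out. After steps 1--4 you have a conformal copy of $R\cap G$ sitting inside a rectangle with Jordan holes and with the four boundary arcs already in the right places. You then want to invoke a ``square-packing uniformization that fixes the outer rectangle and the four distinguished edges.'' But that is not an off-the-shelf theorem: Schramm's cofat uniformization (\cite{Sch95}*{Theorem~4.2}) is stated for closed boundary components mapping to squares or points, with no provision for a quadrilateral structure on the outer boundary. Producing a conformal map onto a square domain that \emph{also} sends four prescribed boundary arcs to the four edges of a rectangle is precisely the content of Proposition~\ref{prop:squareunif}, so your argument, as written, reduces the proposition to a special case of itself. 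You explicitly flag this as ``the main obstacle,'' but flagging the gap is not the same as filling it.

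The paper fills it by running Schramm's extremal-length construction directly. After using Theorem~\ref{thm:hesch} to reduce to the case where $G$ is a circle domain, one takes the minimizer $\rho_0$ of the transboundary modulus $\modu_G(\Gamma_1)$; its restriction to $R\cap G$ is the gradient of a harmonic function $u$ whose boundary values are $0$ on $J_1$ and $1$ on $J_3$. The harmonic conjugate $v$ is then constant on $J_2$ and $J_4$ and, because of the Euler--Lagrange conditions on the complementary components, has the correct periods so that each component maps to an axis-parallel square of side length $\rho_0(p)$. Thus $f=(u,v)$ delivers (i) and (ii) simultaneously in one stroke. For infinitely connected $G$ the paper exhausts by finitely connected circle domains $G_j$ and passes to the limit via Schramm's extended Carath\'eodory kernel convergence theorem. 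This harmonic-conjugate construction is the key idea missing from your proposal; your chain of Riemann maps and the He--Schramm map rearranges the picture but never produces the square holes.

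Two smaller remarks. First, the Riemann map $g:R\to Q$ from your first paragraph is never used again; the composition you eventually build is $\psi\circ\Phi$, so that step is superfluous. Second, your concern about complementary components of $G$ touching $\partial R$ is moot: the hypothesis is that $J_1\cup\dots\cup J_4\subset G$, so the compact set $\partial R$ has positive distance from the closed set $\hatc\setminus G$, and no component of $\hatc\setminus G$ can meet $\partial R$.
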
 

\begin{proof}
Recall that countably connected domains satisfy Koebe's conjecture by Theorem \ref{thm:hesch}. In particular, since $G$ is assumed to satisfy the assumptions of one of the propositions, 
we may assume that $G$ is a circle domain. Schramm's cofat uniformization theorem (\cite{Sch95}*{Theorem 4.2)} then shows the existence of a conformal map $f$ from $R \cap G$ onto a domain 
satisfying Condition (ii). We modify Schramm's method to construct $f$ so that also Condition (i) is satisfied, leaving the details to the reader. See \cite{Nta20}, \cite{Raj17} for similar arguments.  

We first assume that $R \cap G$ is finitely connected. There is a unique minimizer $\rho_0:\hat{G} \to [0,\infty]$ of $\modu_G(\Gamma_1)$ for the path family $\Gamma_1$ defined in Proposition \ref{prop:countablemodu}, i.e., $\rho_0$ is admissible for $\Gamma_1$ and  
\begin{equation}
\label{eq:mingrad} 
\int_{G}(\rho_0\circ \pi_{G})^2 \, dA + \sum_{p\in \mathcal C(G)}\rho_0(p)^2 = \modu_G(\Gamma_1). 
\end{equation}
The restriction of $\rho_0$ to $R \cap G$ is the gradient of a harmonic function $u$ which extends continuously to constant $u=0$ on $J_1$ and constant $u=1$ on $J_3$. 

Moreover, $u$ has a harmonic conjugate $v$ which equals $0$ on $J_2$ and $b=\modu_G(\Gamma_1)$ on $J_4$. One can check that 
$$
f=(u,v):R \cap G \to G' \subset [0,1] \times [0,b]
$$ 
is a conformal homeomorphism satisfying Conditions (i) and (ii). 

The above argument works for all finitely connected domains, without conditions on the shapes of the bounded complementary components. See \cite{Bon16} for a similar construction.  

We now assume that $R \cap G$ is infinitely connected. Since $G$ is a circle domain, the collection $\{p_1,p_2,\ldots \}$ of complementary components with positive diameter is countable (or finite). 
Moreover, we may assume that the complement of $G$ equals the closure of the union of sets $p_j$.  

We approximate $G$ with finitely connected domains $G_j=\hatc \setminus \{p_1,\ldots, p_j\}$. Since $G$ is a circle domain, it is also a cofat domain. Thus  Schramm's 
\emph{extended Carath\'eodory kernel convergence theorem} (\cite{Sch95}*{Therorem 3.1}) applied to the conformal maps $f_j:R \cap G_j \to G'_j$ constructed above gives the existence of a 
limit map $f$ satisfying the requirements of the proposition. 
\end{proof}

We denote by $\Lambda_1$ the family of paths joining the projections of the vertical boundary edges of $Q$ in $\pi_{G'}(Q)$, where $Q$ is the rectangle in Proposition \ref{prop:squareunif}, and by 
$\Lambda_2$ the corresponding family of paths joining the projections of the horizontal boundary edges. We claim that  
\begin{equation} 
\label{eq:heihei} 
\modu_{G'}(\Lambda_1) \geq b \quad \text{and} \quad \modu_{G'}(\Lambda_2) \geq \frac{1}{b}. 
\end{equation} 
Both estimates are proved using the same argument, so we only consider $\Lambda_1$. 

By Property (ii) above, the collection of bounded complementary components of $G'$ with positive diameter consists of a finite or countable number of squares $Q_k \subset Q$. We fix  
an admissible function $\rho:\hat {G'} \to [0,\infty]$ for $\Lambda_1$. Without loss of generality, we may assume that 
\begin{equation} 
\label{eq:tsar} 
\sum_{p \in \mathcal{C}(G')} \rho(p)^2 < \infty  
\end{equation} 
(recall that $\mathcal{C}(G')$ is the collection of complementary components of $G'$). Given $0<t<b$ let $\gamma_t = \pi_{G'} \circ I_t \in \Lambda_1$, where $I_t$ is the horizontal segment parametrized by arclength starting at $(0,t)$ and ending at $(1,t)$, and $\pi_{G'}$ the projection map. By admissibility, 
\begin{equation} 
\label{eq:tref}
1 \leq \int_{\gamma_t} \rho \, ds + \sum_{p \in \mathcal{C}(G')} \rho(p) \quad \text{for every } 0<t<b. 
\end{equation} 
Moreover, since $\rho(p)>0$ for at most countably many point components $p \in \mathcal{C}(G')$ by \eqref{eq:tsar}, we conclude from \eqref{eq:tref} that in fact 
\begin{equation} 
\label{eq:tref2}
1 \leq \int_{\gamma_t} \rho \, ds + \sum_k \rho(Q_k) \quad \text{for all but countably many } 0<t<b; 
\end{equation} 
here and in what follows, we identify the points $x \in G'$ and complementary components $p$ of $G'$ with their projections in $\hat{G'}$. Integrating \eqref{eq:tref2} over $t$ and applying Fubini's theorem, we obtain   
$$
b \leq \int_{G'} \rho \, dA + \sum_k \delta_k \rho(Q_k)=I_1, 
$$
where $\delta_k$ is the sidelength of $Q_k$. Applying H\"older's inequality yields  
\begin{eqnarray*}
I_1^2 \leq \Big(|G' |_2 +\sum_k \delta_k^2 \Big)\cdot\Big(\int_{G'} \rho^2 \, dA + \sum_k \rho(Q_k)^2\Big)=I_2 \cdot I_3.  
\end{eqnarray*}
Here $|\cdot|_2$ denotes the Lebesgue measure. Notice that 
$$ 
I_2=|G'|_2 +\sum_k |Q_k|_2 \leq |Q|_2=b, 
$$
so combining the estimates gives 
\begin{equation} \label{eq:tell}
b \leq I_3= \int_{G'} \rho^2 \, dA + \sum_k \rho(Q_k)^2. 
\end{equation} 
Since \eqref{eq:tell} holds for all admissible functions, we conclude the desired bound \eqref{eq:heihei} for $\Lambda_1$. Proposition \ref{prop:koebelower} follows by combining Proposition 
\ref{prop:squareunif}, \eqref{eq:heihei}, and the conformal invariance of transboundary modulus. 

We now assume that $G$ is countably connected. Then domain $G'$ in Proposition \ref{prop:squareunif} is also countably connected. We claim that now also the opposite inequalities to \eqref{eq:heihei} hold, i.e., 
\begin{equation} 
\label{eq:heihei2} 
\modu_{G'}(\Lambda_1) \leq b \quad \text{and} \quad \modu_{G'}(\Lambda_2) \leq \frac{1}{b}. 
\end{equation} 
It is again sufficient to consider $\Lambda_1$. Let $\rho:\hat{G'} \to [0,\infty]$ be the function defined by $\rho(z)=1$ for $z \in G'$, $\rho(Q_k)=\delta_k$ for squares 
$Q_k$ above (recall that $\delta_k$ is the sidelength), and $\rho(p)=0$ elsewhere. Let $\gamma \in \Lambda_1$, and denote by $\pi_1:\mathbb{C} \to \mathbb{R}$ be the projection to the real axis. 
Then the countability assumption yields  
$$
1 \leq |\pi_1(|\gamma| \cap G')|_1+ |\pi_1(\cup \{Q_k \in |\gamma| \}|_1 \leq \int_\gamma \rho \, ds + \sum_{Q_k \in |\gamma|} \delta_k. 
$$ 
We conclude that $\rho$ is admissible for $\Lambda_1$. Since 
$$
\int_{G'} \rho^2 \, dA + \sum_{p \in \mathcal{C}(G')} \rho(p)^2 = |G'|_2 + \sum_k \delta_k^2 =|Q|_2=b, 
$$
we conclude the desired bound \eqref{eq:heihei2} for $\Lambda_1$. Proposition \ref{prop:countablemodu} follows by combining Proposition 
\ref{prop:squareunif}, \eqref{eq:heihei}, \eqref{eq:heihei2}, and the conformal invariance of transboundary modulus. 

\subsection*{Acknowledgment}
We are grateful to Dimitrios Ntalampekos for the discussions on Koebe's conjecture, which inspired Question \ref{ques:lower}.

\vskip 10pt
\noindent

\bibliographystyle{alpha}
	\bibliography{FugledeBiblio}

\end{document}